\documentclass[a4paper,11pt]{article}
\usepackage{amssymb,amsthm,amsmath,latexsym,url}

\newtheorem{thm}{Theorem}[section]

\newtheorem{lem}[thm]{Lemma}

\newtheorem*{question}{Question}

\newcommand{\T}{\mathcal{T}}

\DeclareMathOperator{\Aut}{Aut}
\DeclareMathOperator{\St}{st}

\frenchspacing
\date{}

\title{A note on Engel elements in the\\ first Grigorchuk group}

\author{\small{\textsc{Marialaura Noce}\footnote{The authors are members of {\em National Group for Algebraic and Geometric Structures, and their Applications} (GNSAGA--INdAM).}}\\
\small{Dipartimento di Matematica, Universit\`a di Salerno, Italy}\\
\small{Matematika Saila, Euskal Herriko Unibertsitatea UPV/EHU, Spain}\\
\small{E-mail: mnoce@unisa.it}\\
[10pt]
\small{\textsc{Antonio Tortora}\footnotemark[1]}\\
\small{Dipartimento di Matematica, Universit\`a di Salerno, Italy}\\
\small{E-mail: antortora@unisa.it}}
\begin{document}
\maketitle

\begin{abstract}
Let $\Gamma$ be the first Grigorchuk group. According to a result of Bar\-thol\-di, the only left Engel elements of $\Gamma$ are the involutions. This implies that the set of left Engel elements of $\Gamma$ is not a subgroup. Of particular interest is to wonder whether this happens also for the sets of bounded left Engel elements, right Engel elements, and bounded right Engel elements of $\Gamma$. Motivated by this, we prove that these three subsets of $\Gamma$ coincide with the identity subgroup.\\

\noindent{\bf 2010 Mathematics Subject Classification:} 20F45, 20E08\\
{\bf Keywords:} Engel element, Grigorchuk group
\end{abstract}

\section{Introduction}
Let $G$ be a group. An element $g\in G$ is called a left Engel element if for any $x\in G$ there exists a positive integer $n=n(g,x)$ such that $[x,_n g]=1$. As usual, the commutator $[x,_n g]$ is defined inductively by the rules
$$[x,_1 g]=[x,g]=x^{-1}x^g\quad {\rm and,\, for}\; n\geq 2,\quad [x,_n g]=[[x,_{n-1} g],g].$$ If $n$ can be chosen independently of $x$, then $g$ is called a left $n$-Engel element, or less precisely a bounded left Engel element. Similarly, $g$ is a right Engel element or a bounded right Engel element if the variable $x$ appears on the right. The group $G$ is then called Engel (or bounded Engel, resp.) if all its elements are both left and right Engel (or bounded Engel, resp.). We denote by $L(G),\overline{L}(G),R(G)$ and $\overline{R}(G)$ respectively the sets of left Engel elements, bounded left Engel elements, right Engel elements, and bounded right Engel elements of $G$. It is clear that these four subsets are invariant under
automorphisms of $G$. Furthermore, by a well-known result of Heineken (see \cite[12.3.1]{Rob}), we have 
\begin{equation}\label{Heineken}
R(G)^{-1}\subseteq L(G)\quad {\rm and}\quad \overline{R}(G)^{-1}\subseteq \overline{L}(G). \tag{*}
\end{equation}

It is a very long-standing question whether the sets $L(G),\overline{L}(G), R(G)$ and $\overline{R}(G)$ are subgroups of $G$ (see Problems 16.15 and 16.16 in \cite{MK}). There are several classes of groups for which this is true (see \cite{Abd} for an account; see also \cite{BMTT, STT}). On the other hand the question is still open, except for $L(G)$ when $G$ is a 2-group. In fact, in this case, one can easily see that any involution is a left Engel element \cite[Proposition 3.3]{Abd}. However, according to an example of Bludov, there exists a 2-group generated by involutions with an element of order four which is not left Engel (\cite{Blu}, see \cite{ML} for a proof). This suggests the following question.

\begin{question}[Bludov]
Assuming that $G$ is not a $2$-group, is $L(G)$ a subgroup of $G$? 
\end{question}

We point out that the group $G$ considered by Bludov is based on the (first) Grigorchuk group \cite{Gri}, that we denote throughout by $\Gamma$. More precisely, $G$ is the wreath product $D_8\ltimes \Gamma^4$ where $D_8$ is the dihedral group of order 8. Since  $\Gamma$ is a 2-group generated by involutions, one might wonder whether $\Gamma$ is an Engel group but the answer is negative, as shown by Bartholdi: 

\begin{thm}[\cite{Ba016}, see also \cite{Ba017}]\label{Bartholdi}
Let $\Gamma$ be the first Grigorchuk group. Then 
$$L(\Gamma) = \{g \in \Gamma \mid g^{2} = 1\}.$$
In particular, $\Gamma$ is not an Engel group.
\end{thm}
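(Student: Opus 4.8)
The plan is to prove the two inclusions of Theorem~\ref{Bartholdi} separately; the inclusion $\{g\in\Gamma\mid g^{2}=1\}\subseteq L(\Gamma)$ is elementary, while the reverse inclusion carries all the weight.

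For the easy inclusion I would argue as in \cite[Proposition~3.3]{Abd}. If $g^{2}=1$, then for every $x\in\Gamma$ the element $y:=[x,g]=x^{-1}x^{g}$ satisfies
\[
y^{g}=(x^{g})^{-1}(x^{g})^{g}=(x^{g})^{-1}x^{g^{2}}=(x^{g})^{-1}x=y^{-1},
\]
and an immediate induction gives $[x,_{n}g]=y^{(-2)^{n-1}}$ for all $n\ge 1$. Since $\Gamma$ is a $2$-group, $y$ has order a power of $2$, so $[x,_{n}g]=1$ as soon as $2^{n-1}$ is a multiple of $|y|$; hence $g\in L(\Gamma)$ (this produces a left Engel element, though in general not a bounded one).

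For the hard inclusion $L(\Gamma)\subseteq\{g\mid g^{2}=1\}$ I would use the self-similar realization $\Gamma=\langle a,b,c,d\rangle\le\Aut(\T)$ on the binary rooted tree $\T$, with $b=(a,c)$, $c=(a,d)$, $d=(1,b)$, the level-one stabilizer $\St(1)$ of index $2$, and the injection $\psi\colon\St(1)\to\Gamma\times\Gamma$, and I would prove $g\in L(\Gamma)\Rightarrow g^{2}=1$ by strong induction on the length of $g$ as a word in $a,b,c,d$; the induction is well founded because $\Gamma$ is contracting with nucleus $\{1,a,b,c,d\}$, so outside a finite set of short elements the two sections of an element of $\St(1)$ are strictly shorter than it. The case $g\in\St(1)$ is clean: for $x\in\St(1)$ with $\psi(x)=(x_{0},x_{1})$ and $\psi(g)=(g_{0},g_{1})$ one computes $\psi([x,_{n}g])=\bigl([x_{0},_{n}g_{0}],[x_{1},_{n}g_{1}]\bigr)$, and since the projection of $\psi(\St(1))$ onto the first coordinate is all of $\Gamma$, any $x_{0}$ witnessing that $g_{0}$ is not left Engel lifts to an $x\in\St(1)$ witnessing that $g$ is not left Engel. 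Hence $g\in L(\Gamma)\cap\St(1)$ forces $g_{0},g_{1}\in L(\Gamma)$, so $g_{0}^{2}=g_{1}^{2}=1$ by induction, so $g^{2}=1$ by injectivity of $\psi$.

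The main obstacle is the case $g\notin\St(1)$: one must show that every element of order $>2$ outside $\St(1)$ fails to be left Engel, the smallest-order instances having order $4$, e.g.\ $g=ad$, and here there is no reduction of the previous kind. The contracting property works against us, since the nucleus consists of elements of order at most $2$ and so the order of $g$ is invisible in its sections — for instance $\psi((ad)^{2})=(b,b)$ with $b$ an involution — and one is forced to produce explicitly, for each such $g$, an element $x$ with $[x,_{n}g]\neq 1$ for all $n$. Writing $g$ as the level-one transposition with sections $g_{0},g_{1}$ and taking $x\in\St(1)$, the commutators $[x,_{n}g]$ stay in $\St(1)$ and the pair of sections of $[x,_{n}g]$ is obtained from that of $x$ by iterating the map
\[
(u,v)\longmapsto(u^{-1}v^{\,g_{0}},\,v^{-1}u^{\,g_{1}})
\]
on $\Gamma\times\Gamma$, and the task is to choose the starting pair so that this orbit never hits $(1,1)$. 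Establishing such non-termination requires exploiting the branch structure of $\Gamma$ to keep the supports of the $[x,_{n}g]$ from collapsing, together with the non-nilpotent behaviour of $\Gamma$ near the boundary of $\T$ (along the ray $1^{\infty}$ the sections of $b,c,d$ cycle through $\{b,c,d\}$ and never trivialize). This non-termination analysis is the heart of Bartholdi's argument; carried out in general it yields a decision procedure for the Engel property in contracting self-similar groups, and specialized to $\Gamma$ it gives the stated equality. The final clause is then immediate: $ad\in\Gamma$ has order $4$, so $ad\notin L(\Gamma)$ and $\Gamma$ is not an Engel group.
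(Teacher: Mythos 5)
Your first inclusion is correct: the computation $[x,_n g]=[x,g]^{(-2)^{n-1}}$ for an involution $g$, combined with $\Gamma$ being a $2$-group, is exactly the standard argument of \cite[Proposition 3.3]{Abd} that the paper alludes to. The reduction you give for $g\in\St_\Gamma(1)$ (coordinatewise formula $\psi([x,_n g])=([x_0,_n g_0],[x_1,_n g_1])$, surjectivity of the first-coordinate projection of $\psi(\St_\Gamma(1))$, contraction to the nucleus) is also sound as far as it goes, but note that it is conditional on the other case: the sections $g_0,g_1$ of order greater than $2$ need not lie in $\St_\Gamma(1)$, so the case $g\notin\St_\Gamma(1)$ is needed at every stage of your induction, not just at the top.

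And that case is precisely where your proposal stops being a proof. For $g=ah\notin\St_\Gamma(1)$ of order greater than $2$ you never exhibit a witness $x$ with $[x,_n g]\neq 1$ for all $n$: you write down the correct iteration $(u,v)\mapsto(u^{-1}v^{g_0},v^{-1}u^{g_1})$ on section pairs and then assert that the required non-termination follows from "the branch structure keeping supports from collapsing" and from the behaviour of sections along the ray $1^\infty$, explicitly deferring to "the heart of Bartholdi's argument". That is an appeal to the very theorem being proved: Bartholdi's result in \cite{Ba016,Ba017} consists exactly of an analysis (in fact an algorithmic certificate for contracting automaton groups) showing that such orbits avoid $(1,1)$ for suitable $x$, and nothing in your sketch turns the stated heuristics into an argument, even for the single element $g=ad$ of order $4$. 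So the hard inclusion $L(\Gamma)\subseteq\{g\mid g^2=1\}$ remains unproved; your text reduces the theorem (partially and correctly) to its hardest case and then cites it. This is in line with the paper itself, which does not prove Theorem \ref{Bartholdi} but quotes it from \cite{Ba016,Ba017}; to go beyond that, you would have to carry out the non-termination analysis concretely, e.g.\ produce for each non-involution outside $\St_\Gamma(1)$ an explicit $x$ together with an invariant of the orbit of $(\psi(x),g)$ under the above map that is preserved and nontrivial for all $n$.
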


The question now arises: {\em are $\bar{L}(\Gamma),R(\Gamma)$ and $\bar{R}(\Gamma)$ subgroups of $\Gamma$?}\\ Recall that $\Gamma$ is just-infinite, that is, $\Gamma$ is an infinite group all of whose proper quotients are finite. As a consequence, if $\overline{L}(\Gamma)$ were a (proper) subgroup of $\Gamma$, then $\overline{L}(\Gamma)$ would be finitely generated and, by Theorem \ref{Bartholdi}, also abelian. Hence $\overline{L}(\Gamma)$ should be finite and therefore trivial, being $\Gamma$ an infinite 2-group and $\overline{L}(\Gamma)$ of finite index.  
Notice also that, by (\ref{Heineken}), the same holds for $R(\Gamma)$ and $\overline{R}(\Gamma)$. 

Motivated by this, in the present note we prove the following theorem.  
	
\begin{thm}\label{main}
Let $\Gamma$ be the first Grigorchuk group. Then 
$$\bar{L}(\Gamma)=R(\Gamma)=\bar{R}(\Gamma)=\{1\}.$$
\end{thm}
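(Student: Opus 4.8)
The key observation is that, by the discussion preceding Theorem~\ref{main} (using just-infiniteness of $\Gamma$ together with Theorem~\ref{Bartholdi} and the inclusions~(\ref{Heineken})), it suffices to prove that $\overline{L}(\Gamma)$ contains no nontrivial involution; indeed, we already know $\overline{L}(\Gamma) \subseteq L(\Gamma)$ consists of involutions, and if $\overline{L}(\Gamma)$ were nontrivial it would be a finite abelian subgroup, hence would contain a nontrivial element of $\overline{L}(\Gamma)$ — an involution. So the whole theorem reduces to a single assertion: \emph{no nontrivial involution of $\Gamma$ is a bounded left Engel element.} Then $\overline{L}(\Gamma) = \{1\}$, and $\overline{R}(\Gamma) = R(\Gamma) = \{1\}$ follows from~(\ref{Heineken}) and the fact that $R(\Gamma) \subseteq L(\Gamma)$ consists of involutions too, so the same just-infiniteness argument collapses them. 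Actually one should be slightly careful: $R(\Gamma)$ need not obviously be a subgroup a priori, so for $R(\Gamma)$ I would instead argue directly that a nontrivial right Engel involution would, via Heineken's result, produce a bounded left Engel involution (a right Engel element of a group in which every element has bounded order — here, is an involution — has the relevant commutator sequence stabilizing, and one extracts boundedness), reducing again to the same core claim; alternatively handle $R$ and $\overline R$ by the symmetric version of the computation below.

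The heart of the matter is therefore the core claim. Fix a nontrivial involution $g \in \Gamma$; we must produce, for every candidate bound $n$, an element $x$ with $[x,_n g] \neq 1$. Here I would exploit the self-similar structure of $\Gamma$ acting on the rooted binary tree. Write $\Gamma = \langle a, b, c, d \rangle$ with the usual wreath recursion $b = (a,c)$, $c = (a,d)$, $d = (1,b)$, $a$ the root swap. Every involution of $\Gamma$ is conjugate to $a$, $b$, $c$, $d$, or a product such as $ac$, etc.; since the Engel sets are invariant under automorphisms (hence under conjugation — inner automorphisms suffice), it is enough to treat one representative from each conjugacy class of involutions, or better, to show the length of the commutator chain $[x,_n g]$ can be driven to be nonzero for arbitrarily large $n$ by choosing $x$ deep in the tree. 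The natural mechanism: using the recursion, $[x,_n g]$ restricted to a subtree is itself a commutator of the form $[x',_{m} g']$ for a suitable section $g'$ of $g$ (again an involution, possibly $a$ or a conjugate of $b,c,d$), with $m$ comparable to $n$; and since the section of $b$ along the all-$1$'s ray cycles through $a,c$/$a,d$/$1,b$, one never reaches the identity. So one sets up an induction on the tree levels showing that for each $n$ there is $x_n$ (supported deep enough) with $[x_n,_n g] \neq 1$, by pulling the inequality down one level at a time and invoking that $[x, g] \neq 1$ for at least one $x$ at the bottom (which holds because $g \neq 1$ is not central — $\Gamma$ has trivial center).

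**Main obstacle.** The delicate point is the bookkeeping in the wreath recursion: when one computes $[x,_n g]$ and passes to a section at a first-level vertex, the index $n$ may drop (because $g = (g_0, g_1)$ with, say, $g_0$ trivial forces some commutators to become trivial on that coordinate) and one must ensure it does not drop all the way to something bounded independently of the starting $n$. Concretely, for $g$ a conjugate of $d = (1,b)$ the "active" coordinate alternates and the recursion depth roughly halves at each descent through $d \to b \to c \to a$, so a naive descent loses a constant factor each step and could terminate; the fix is to choose $x$ so that the commutator stays inside the coordinate where $g$ acts as $a$ (where commutation with the root swap behaves well and does not kill the chain) and to track a Lyapunov-type quantity — e.g. the length in the tree metric of the support of $[x,_k g]$ — that can be kept positive for $k$ up to any prescribed $n$. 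Making this quantity precise, and checking the finitely many conjugacy-class representatives of involutions uniformly (including the "long" involutions like $ac$, $ad$, $(ac)^2 \cdot \ldots$ — in fact $ac$, $ad$, $(ad)^2$, etc. are the relevant ones, since $ab$ has infinite order), is the real work; everything else is the soft reduction above.
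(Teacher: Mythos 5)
Your reduction of $\overline{L}(\Gamma)=\{1\}$ to the claim that no nontrivial involution is a bounded left Engel element is fine (it only needs $\overline{L}(\Gamma)\subseteq L(\Gamma)$ and Theorem~\ref{Bartholdi}, not the subgroup hypothesis). But the core claim itself is not proved: the mechanism you describe (finitely many conjugacy classes of involutions, descending through sections of $b,c,d$, a ``Lyapunov-type quantity'' kept positive) is left entirely unexecuted, and you yourself flag it as ``the real work.'' Moreover the premise that every involution is conjugate to one of a short list of representatives is unjustified, and the naive descent you describe does lose depth at each level, exactly as you fear. The paper's actual argument is different and much more rigid: after reducing to $x=ag$ with $g\in\St_{\Gamma}(1)$ (by passing to a section of $x$ that is not in $\St_{\Gamma}(1)$ — no conjugacy classification needed), one uses the regular branch structure to pick $y\in K$ with $\psi(y)=(k,1)$ and $k\in K$ of order exceeding $2^{m-1}$, and then the explicit formula of Lemma~\ref{induction}, $\psi([y,_m x])=(k^{(-1)^m 2^{m-1}},(k^{g_2})^{(-1)^{m-1}2^{m-1}})$, shows $[y,_m x]\neq 1$. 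The engine is the unbounded exponent of $K$, not a combinatorial analysis of the sections of the involution; without some such quantitative input your plan has no way to rule out that the chain dies after a bounded number of steps.

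The treatment of $R(\Gamma)$ and $\overline{R}(\Gamma)$ has a second, independent gap. Heineken's result~(\ref{Heineken}) gives $R(\Gamma)^{-1}\subseteq L(\Gamma)$, hence right Engel elements are involutions, but it does \emph{not} give $R(\Gamma)\subseteq\overline{L}(\Gamma)$, and your parenthetical argument that a right Engel involution ``has the relevant commutator sequence stabilizing, and one extracts boundedness'' is not a proof — there is no general principle converting an unbounded right Engel condition into a bounded left one, even for elements of order $2$. The paper has to do genuinely new work here: Lemma~\ref{induction2} computes $\psi([x,_{m+1}y])$ for $x=ag$ and $y\in\St_{\Gamma}(1)$, and the contradiction comes from choosing $y_1,h\in K$ with $[h,_n y_1]\neq 1$ for all $n$ (possible since $K$ is not an Engel group) and setting $y_2=[y_1,h]^{g_1^{-1}}$ so that the first coordinate of $\psi([x,_{m+1}y])$ becomes $[h,_{m+1}y_1]^{y_1}$. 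Your alternative suggestion to ``handle $R$ and $\overline{R}$ by the symmetric version of the computation below'' is not workable as stated, because the computation ``below'' does not exist yet and the right Engel case requires a differently designed test element $y$, not a mirror image of the left Engel one.
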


The proof of Theorem \ref{main} will be given in the next section. 

\section{The proof}

Before proving Theorem \ref{main}, we recall how the Grigorchuk group $\Gamma$ is defined. We also collect some properties of $\Gamma$ on which depends our proof. For a more detailed account on $\Gamma$, we refer to \cite[Chapter 8]{dlH}.

Let $\mathcal{T}$ be the regular binary rooted tree with vertices indexed by $X^*$,
the free monoid on the alphabet $X=\{0,1\}$. An automorphism of $\mathcal{T}$ is a bijection of the vertices that preserves incidence. The set $\Aut\T$ of all automorphisms of $\mathcal{T}$ is a group with respect to composition. The stabilizer $\St(n)$ of the $n$th level of $\mathcal{T}$ is the normal subgroup of $\Aut\T$ consisting of the automorphisms leaving fixed all words of length $n$.

If an automorphism $g$ fixes a vertex, then the restriction $g_i$ of $g$ to the subtree hanging from this vertex induces an automorphism of $\T$. In particular, if $g\in\St(n)$ then $g_i$ is defined for $i=1,\ldots,2^n$, and one can consider the injective homomorphism
$$\psi_n:g\in \St(1)\longmapsto (g_1,\ldots, g_{2^n})\in  \Aut\T\times\overset{2^n}{\ldots}\times\Aut\T.$$
We write $\psi$ instead of $\psi_1$. Assuming $\psi(g)=(g_1,g_2)$, it is easy to see that
$$\psi(g^a)=(g_2,g_1),$$
where $a$ is the rooted automorphism of $\T$ corresponding to the permutation $(0\,1)$; this will be used freely in the sequel.
 
The Grigorchuk group $\Gamma$ is the subgroup of $\Aut\T$ generated by the rooted automorphism $a$, and the automorphisms $b,c,d\in \St(1)$ which are defined recursively as follows:
$$\psi(b)=(a,c),\ \psi(c)=(a,d),\ \psi(d)=(1,b).$$
Moreover,
$$\Gamma=\langle a \rangle \ltimes \St_{\Gamma}(1)$$
where $\St_{\Gamma}(1)=\Gamma \cap \St(1)$. Recall also that $\Gamma$ is spherically transitive (i.e., it acts transitively on each level of $\T$) and it has a subgroup $K$ of finite index such that $\psi(K)\supseteq K\times K$. In other words, $\Gamma$ is regular branch over $K$.

For the proof of Theorem \ref{main} we require two lemmas concerning commutators between specific elements of $\Gamma$. 

\begin{lem}\label{induction}
Let $x\in\Gamma$ be an involution and $y\in \St_{\Gamma}(1)$, with $\psi(y)=(k,1)$. Suppose $x=ag$, where $g \in \St_{\Gamma}(1)$ and $\psi(g)=(g_{1}, g_{2})$. Then, for every $m\geq 1$, we have
$$\psi([y,_m x]) = (k^{(-1)^{m}\,2^{m-1}}, (k^{g_2})^{(-1)^{m-1}\,2^{m-1}}).$$
\end{lem}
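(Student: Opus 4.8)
The key observation is that since $x = ag$ is an involution, we have $(ag)^2 = 1$, which after applying $\psi$ gives a relation between $g_1$ and $g_2$. Indeed, $\psi((ag)^2) = \psi(a g a \cdot g) = (g_2, g_1)(g_1, g_2) = (g_2 g_1, g_1 g_2)$, so $x^2 = 1$ forces $g_2 g_1 = 1$, i.e. $g_1 = g_2^{-1}$. This is the crucial structural fact that makes the formula work.

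For the base case $m = 1$: I compute $[y, x] = y^{-1} y^x = y^{-1} (ag)^{-1} y (ag)$. Since $x$ is an involution, $x^{-1} = x = ag$, so $[y,x] = y^{-1} \cdot y^{ag}$. Now $y^a$ has $\psi(y^a) = (1, k)$, and conjugating further by $g$ gives $\psi(y^{ag}) = (1, k^{g_2})$ (the first coordinate $1$ is unaffected, the second becomes $k^{g_2}$). Therefore $\psi([y,x]) = \psi(y)^{-1} \psi(y^{ag}) = (k^{-1}, 1)(1, k^{g_2}) = (k^{-1}, k^{g_2})$, which matches the claimed formula $(k^{(-1)^1 2^0}, (k^{g_2})^{(-1)^0 2^0}) = (k^{-1}, k^{g_2})$.

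For the inductive step, I assume $\psi([y,_m x]) = (k^{\epsilon}, (k^{g_2})^{\delta})$ where $\epsilon = (-1)^m 2^{m-1}$ and $\delta = (-1)^{m-1} 2^{m-1}$, and compute $[y,_{m+1} x] = [[y,_m x], x] = [y,_m x]^{-1} \cdot [y,_m x]^{ag}$. Writing $z = [y,_m x]$ with $\psi(z) = (z_1, z_2) = (k^\epsilon, (k^{g_2})^\delta)$, I get $\psi(z^{ag}) = (z_2^{g_1}, z_1^{g_2})$. Hence $\psi([z,x]) = (z_1^{-1} z_2^{g_1},\ z_2^{-1} z_1^{g_2})$. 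The task is to simplify each coordinate: in the first, $z_1^{-1} z_2^{g_1} = k^{-\epsilon} ((k^{g_2})^\delta)^{g_1}$, and here I use $g_1 = g_2^{-1}$ so that $(k^{g_2})^{g_1} = (k^{g_2})^{g_2^{-1}} = k$, giving $k^{-\epsilon} k^{\delta} = k^{\delta - \epsilon}$; similarly the second coordinate becomes $(k^{g_2})^{-\delta} (k^\epsilon)^{g_2} = (k^{g_2})^{-\delta}(k^{g_2})^{\epsilon} = (k^{g_2})^{\epsilon - \delta}$. Then one checks the exponent bookkeeping: $\delta - \epsilon = (-1)^{m-1}2^{m-1} - (-1)^m 2^{m-1} = (-1)^{m-1}2^m = (-1)^{(m+1)-1}2^{(m+1)-1}$... wait, let me recheck the signs against the target — the target first exponent for $m+1$ is $(-1)^{m+1}2^m$. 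Since $\delta - \epsilon = (-1)^{m-1}2^{m-1}(1 - (-1)) = (-1)^{m-1} \cdot 2 \cdot 2^{m-1} = (-1)^{m-1}2^m = (-1)^{m+1}2^m$ (as $(-1)^{m-1} = (-1)^{m+1}$), this matches; and $\epsilon - \delta = (-1)^m 2^m = (-1)^{(m+1)-1}2^{(m+1)-1}$, which matches the target second exponent.

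**The main obstacle** is keeping the conjugating elements under control: a priori $z_1, z_2$ lie in various restrictions and one must verify that the identity $g_1 = g_2^{-1}$ (from $x$ being an involution) is exactly what collapses the conjugation $(k^{g_2})^{g_1}$ back to $k$ at every stage, and that no stray conjugators accumulate. Once that is pinned down, the proof is a clean induction with the sign/power arithmetic being routine. A minor care point is the relabeling $\psi(z^{ag}) = (z_2^{g_1}, z_1^{g_2})$: applying $a$ swaps the coordinates to $(z_2, z_1)$ and then $g = (g_1, g_2)$ acts coordinatewise by conjugation, so the result is indeed $(z_2^{g_1}, z_1^{g_2})$, which I should state explicitly.
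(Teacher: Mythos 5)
Your proof is correct and follows essentially the same route as the paper: both derive $g_1=g_2^{-1}$ from $x^2=1$, verify the base case by computing $\psi(y^{-1}y^{ag})=(k^{-1},k^{g_2})$, and run the induction using the coordinate swap under $a$ together with $g_2g_1=1$ to collapse the conjugation, with the same exponent bookkeeping. No gaps.
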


\begin{proof}
Since $a^2=1$, we have $(ag)^{2}=1$ and so $g^a=g^{-1}$. Thus
$$(g_2,g_1)=\psi(g^{a})=\psi(g^{-1})=(g_{1}^{-1}, g_{2}^{-1}),$$
from which it follows that $g_{2}=g_{1}^{-1}$. 

We now proceed by induction on $m$. The case $m=1$ is clear, indeed  
$$\psi([y,ag])=\psi(y^{-1}y^{ag})=(k^{-1},1)(1,k^{g_2})=(k^{-1}, k^{g_2}).$$ 
Let $m>1$. Then, by using the induction hypothesis and that $g_2g_1=1$, we get
\begin{align*}
\psi([y,_m ag])& = \psi([y,_{m-1} ag]^{-1} [y,_{m-1} ag]^{ag}) \\
& = \left(k^{(-1)^m\,2^{m-2}}, (k^{g_2})^{(-1)^{m-1}\,2^{m-2}}\right)\left((k^{g_2g_1})^{(-1)^{m-2}\,2^{m-2}}, (k^{g_2})^{(-1)^{m-1}\,2^{m-2}}\right)\\
& = \left(k^{(-1)^{m}\,2^{m-1}}, (k^{g_2})^{(-1)^{m-1}\,2^{m-1}}\right), 
\end{align*}
as desired.
\end{proof}

\begin{lem}\label{induction2}
Let $x\in\Gamma$ and $y\in\St_{\Gamma}(1)$, with $\psi(y)=(y_1,y_2)$. Suppose $x=ag$, where $g \in \St_{\Gamma}(1)$ and $\psi(g)=(g_{1}, g_{2})$. Then, for every $m\geq 1$, we have
$$\psi([x,_{m+1} y]) = ([(y_{2}^{-1})^{g_{1}},_{m} y_{1}]^{y_{1}}, [(y_{1}^{-1})^{g_{2}},_{m} y_{2}]^{y_{2}}).$$
\end{lem}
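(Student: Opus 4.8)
The plan is to reduce everything to a computation inside $\St_{\Gamma}(1)$, where $\psi$ is a homomorphism and commutators split coordinatewise. First I would peel off a single commutator. Writing $x=ag$ and using $a^{-1}=a$,
\[
[x,y]=x^{-1}y^{-1}xy=g^{-1}a\,y^{-1}ag\,y=g^{-1}(y^{-1})^{a}gy,
\]
which is a product of elements of $\St_{\Gamma}(1)$ since $g,y\in\St_{\Gamma}(1)$ and $\St(1)$ is normalized by $a$; in particular $[x,y]\in\St_{\Gamma}(1)$. Applying $\psi$ and using $\psi(y^{a})=(y_{2},y_{1})$ from the preliminaries, a short product in $\Aut\T\times\Aut\T$ gives
\[
\psi([x,y])=(g_{1}^{-1},g_{2}^{-1})(y_{2}^{-1},y_{1}^{-1})(g_{1},g_{2})(y_{1},y_{2})=\big((y_{2}^{-1})^{g_{1}}y_{1},\ (y_{1}^{-1})^{g_{2}}y_{2}\big).
\]

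Since $[x,y]$ and $y$ lie in $\St_{\Gamma}(1)$, so does every $[x,_{k}y]$ with $k\ge2$, and because $\psi$ is a homomorphism on $\St(1)$ while commutators in a direct product are taken componentwise, for all $m\ge1$
\[
\psi([x,_{m+1}y])=\psi\big([[x,y],_{m}y]\big)=\big([(y_{2}^{-1})^{g_{1}}y_{1},_{m}y_{1}],\ [(y_{1}^{-1})^{g_{2}}y_{2},_{m}y_{2}]\big).
\]
To finish I would invoke the elementary identity $[uv,_{m}v]=[u,_{m}v]^{v}$, valid in any group, proved by a one-line induction on $m$ (the case $m=1$ being $[uv,v]=v^{-1}[u,v]v$, and the step $[uv,_{m+1}v]=[[u,_{m}v]^{v},v]=[[u,_{m}v],v]^{v}=[u,_{m+1}v]^{v}$). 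Applying it with $(u,v)=((y_{2}^{-1})^{g_{1}},y_{1})$ in the first coordinate and with $(u,v)=((y_{1}^{-1})^{g_{2}},y_{2})$ in the second turns the previous display into the asserted formula.

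I do not expect a genuine obstacle; the work is bookkeeping. The one point requiring care is getting $\psi([x,y])$ exactly right: conjugation by $a$ transposes the two coordinates, and the surviving $g$-factors must be seen to produce the conjugations $(\cdot)^{g_{1}}$ and $(\cdot)^{g_{2}}$ rather than plain products. It is also worth noting that, in contrast with Lemma \ref{induction}, this argument never uses $x^{2}=1$, which is why no relation between $g_{1}$ and $g_{2}$ appears.
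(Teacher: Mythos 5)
Your proof is correct and follows essentially the same route as the paper: compute $\psi([x,y])=((y_2^{-1})^{g_1}y_1,(y_1^{-1})^{g_2}y_2)$ and then handle the iteration coordinatewise. The only cosmetic difference is that you package the inductive step as the standalone identity $[uv,_m v]=[u,_m v]^v$, whereas the paper carries out the equivalent induction directly on $\psi([x,_{m}y])$.
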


\begin{proof}
Of course, $[x,_{n} y] \in \St_{\Gamma}(1)$ for every $n\geq 1$. Thus
\begin{align*}
\psi([x, y]) & = \psi((y^{-1})^{x}y) = \psi((y^{-1})^{a})^{\psi(g)}\psi(y) = ((y_{2}^{-1})^{g_{1}}, (y_{1}^{-1})^{g_{2}})(y_{1}, y_{2}) \\
& = ((y_{2}^{-1})^{g_{1}}y_{1}, (y_{1}^{-1})^{g_{2}}y_{2}).
\end{align*}
It follows that
\begin{align*}
\psi([x, y, y]) & = [\psi([x, y]), \psi(y)] \\
& = [((y_{2}^{-1})^{g_{1}}y_{1}, (y_{1}^{-1})^{g_{2}}y_{2}),(y_1,y_2)] \\
& = ([(y_{2}^{-1})^{g_{1}}y_{1}, y_{1}], [(y_{1}^{-1})^{g_{2}}y_{2}, y_{2}]) \\
& = ([(y_{2}^{-1})^{g_{1}}, y_{1}]^{y_{1}}, [(y_{1}^{-1})^{g_{2}}, y_{2}]^{y_{2}}).
\end{align*}
This proves the result when $m=1$. Let $m>1$. Then, by induction, we conclude that
\begin{align*}
\psi([x, _{m+1} y]) & = [\psi([x,_{m} y]), \psi(y)] \\
& = [([(y_{2}^{-1})^{g_{1}},_{m-1} y_{1}]^{y_{1}}, [(y_{1}^{-1})^{g_{2}},_{m-1} y_{2}]^{y_{2}}), (y_1,y_2)] \\
& = ([(y_{2}^{-1})^{g_{1}},_{m} y_{1}]^{y_{1}}, [(y_{1}^{-1})^{g_{2}},_{m} y_{2}]^{y_{2}}).
\end{align*}

\end{proof}

We are now ready to prove Theorem \ref{main}. 

\begin{proof}[\bf Proof of Theorem \ref{main}] 
Let $x$ be a nontrivial Engel element of $\Gamma$. First, notice that we may assume $x\notin \St_{\Gamma}(1)$. In fact, if $x\in \St_{\Gamma}(n)\backslash \St_{\Gamma}(n+1)$ then $$\psi_{n}(x) = (x_{1}, \dots, x_{2^{n}})$$
where all the $x_i$'s are Engel elements of the same kind of $x$ and one of $x_i$'s does not belong to $\St_{\Gamma}(1)$. Hence $x=ag$, for some $g\in \St_{\Gamma}(1)$ with $\psi(g)=(g_1,g_2)$. We distinguish two cases: $x\in \overline{L}(\Gamma)$ and $x\in R(\Gamma)$. 	

Assume $x\in\overline{L}(\Gamma)$. Then $[y,_n x]=1$ for some $n\geq 1$ and for every $y\in \Gamma$. Also $x^2=1$, by Theorem \ref{Bartholdi}. Since $K$ is not of finite exponent, we can take $k\in K$ of order $>2^{m-1}$ for some $m$. On the other hand $\psi(K)\supseteq K\times K$, so there exists $y\in K\subseteq\St_{\Gamma}(1)$ such that $(k,1)=\psi(y)$. Thus, by Lemma \ref{induction}, we have
$$(1,1)=\psi(1)=\psi([y,_m x])=\left(k^{(-1)^{m}\,2^{m-1}}, (k^{g_2})^{(-1)^{m-1}\,2^{m-1}}\right).$$
It follows that $k^{2^{m-1}}=1$, a contradiction. This proves that $\overline{L}(\Gamma) = \{1\}$. 

Assume $x\in R(\Gamma)$. Since $K$ is not abelian, it cannot be an Engel group by Theorem \ref{Bartholdi}. Thus $[h,_{n} y_1]\neq 1$
for some $h,y_1\in K$ and for every $n\geq 1$. Put $y_2=[y_1, h]^{g_1^{-1}}$. Obviously, $y_2\in K$ and $(y_2^{-1})^{g_1}=[h,y_1]$. Now $\Gamma$ is regular branch over $K$, so there exists $y\in K\subseteq\St_{\Gamma}(1)$ such that $\psi(y)=(y_1,y_2)$. Furthermore, there is $m=m(x,y)\geq 1$ such that $[x,_m y]=1$. Applying Lemma \ref{induction2}, we get
\begin{align*}
(1,1) & =\psi(1) =\psi([x,_{m+1} y])  \\
& =([(y_{2}^{-1})^{g_{1}},_{m} y_{1}]^{y_{1}}, [(y_{1}^{-1})^{g_{2}},_{m} y_{2}]^{y_{2}}) \\
& = ([h,_{m+1} y_1]^{y_1}, [(y_{1}^{-1})^{g_{2}},_{m} y_{2}]^{y_{2}}).
\end{align*}
This implies that $[h,_{m+1} y_1]=1$, which is a contradiction. Therefore $R(\Gamma)=\overline{R}(\Gamma) = \{1\}$, and the proof of Theorem \ref{main} is complete.
\end{proof}

\vspace {0.5cm}

\noindent{\bf Aknowledgements.} The authors would like to thank Prof. Gustavo A. Fern\'andez-Alcober for interesting and helpful conversations (at {\em cafeter\'ia}).

\end{document}